\documentclass[12pt,oneside,reqno]{amsart}
\usepackage{amsthm}
\usepackage{newtxmath}
\usepackage[utf8]{inputenc}
\usepackage{mathtools}
\usepackage{geometry}
\makeatletter
\numberwithin{equation}{section}
\numberwithin{figure}{section}

\usepackage{OldStandard}

\makeatother

\theoremstyle{plain}
\newtheorem*{prop*}{\protect\propositionname}
\newtheorem{lemma}{Lemma}
\providecommand{\propositionname}{Proposition}

\begin{document}
\title{A trigonometric approach to an identity by Ramanujan}
\author{C. Vignat }
\address{Department of Mathematics, Tulane University, cvignat@tulane.edu, Physics Department,
Université Paris Saclay}
\begin{abstract}
An identity by Ramanujan is expressed using polar coordinates, so that
its proof reduces to the verification of an elementary trigonometric identity.
This approach produces a few variations on Ramanujan's original identity.
\end{abstract}

\maketitle

\section{Introduction}

Entry 45 in Chapter 23 of Ramanujan's $4$th notebook \cite[p.102]{Berndt}
reads:
\begin{prop*}
Let $a,b,c$ and $d$ be any numbers such that $ad=bc.$ Then
\begin{equation}
64\left\{ \left(a+b+c\right)^{6}+\left(b+c+d\right)^{6}-\left(c+d+a\right)^{6}-\left(d+a+b\right)^{6}
+\left(a-d\right)^{6}-\left(b-c\right)^{6}\right\}\label{eq:Ramanujan}
\end{equation}
\[
\times\left\{ \left(a+b+c\right)^{10}+\left(b+c+d\right)^{10}-\left(c+d+a\right)^{10}-\left(d+a+b\right)^{10}+\left(a-d\right)^{10}-\left(b-c\right)^{10}\right\} 
\]
\[
=45\left\{ \left(a+b+c\right)^{8}+\left(b+c+d\right)^{8}-\left(c+d+a\right)^{8}-\left(d+a+b\right)^{8}+\left(a-d\right)^{8}-\left(b-c\right)^{8}\right\} ^{2}.
\]
\end{prop*}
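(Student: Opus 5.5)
The plan is to turn the six-term alternating sums into a single generating function and to factor that function as a product of hyperbolic (equivalently trigonometric) sines. For $m\ge 1$ write
\[
F_{2m}=(a+b+c)^{2m}+(b+c+d)^{2m}-(c+d+a)^{2m}-(d+a+b)^{2m}+(a-d)^{2m}-(b-c)^{2m},
\]
and put $G(t)=\sum_{m\ge0}F_{2m}t^{2m}/(2m)!$. This is the signed sum of $\cosh(tu_i)$ over the six linear forms $u_i$; the $m=0$ terms cancel because the signs sum to zero. The claim is then
\[
64\,F_6F_{10}=45\,F_8^2,
\]
which is a relation among three Taylor coefficients of $G$.

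First I would remove the constraint $ad=bc$ with a parametrization. I would try $a=ps$, $b=pt$, $c=qs$, $d=qt$. In the spirit of the paper, I would rather use polar coordinates, writing $a=r\cos\alpha\cos\beta$ and so on, so that each linear form becomes $r$ times a trigonometric expression in two angles.

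Next, I would use sum-to-product formulas to pair the six cosh terms: the terms $(a+b+c)$ with $(b+c+d)$, then $(c+d+a)$ with $(d+a+b)$, then $(a-d)$ with $(b-c)$. The target is an identity of the form
\[
G(t)=K\sinh(\lambda_1t)\sinh(\lambda_2t)\sinh(\lambda_3t)\quad\text{with}\quad \lambda_1+\lambda_2+\lambda_3=0,
\]
or a closely related product whose exponents sum to zero. Verifying this product identity is the elementary trigonometric identity the abstract alludes to, and I expect it to be the main obstacle. Finding the right choice of angles, so that the frequencies really satisfy a linear relation, is the delicate part. To discover it, I would first test numerically on the specialization $d=bc/a$.

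Once the factorization holds, expand the product of sinh's as a combination of $\cosh$ and $\sinh$ terms. Because $\lambda_1+\lambda_2+\lambda_3=0$, it becomes a combination of exponentials $e^{\pm\lambda_i t}$, so that $F_{2m}$ is proportional to $\lambda_1^{2m}+\lambda_2^{2m}+\lambda_3^{2m}$ up to a fixed $m$-dependent normalization. Since $e_1=\lambda_1+\lambda_2+\lambda_3=0$, Newton's identities express all power sums through $e_2$ and $e_3$. In particular $p_2$ and $p_4$ are proportional to powers of $e_2$, which should account for $F_2=F_4=0$ after normalization. The quantities $p_6$, $p_8$ and $p_{10}$ involve only $e_2^3,e_3^2$, then $e_2^4,e_2e_3^2$, then $e_2^5,e_2^2e_3^2$.

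If instead the structure yields $F_{2m}\propto e_2^{\,j}e_3^{\,k}$ monomially, as it does in the classical $6$–$10$–$8$ identity of Ramanujan (where $F_6\propto xyz$-type products and $F_8\propto e_2 e_3^2$), the relation $F_6F_{10}\propto F_8^2$ is immediate. The constant $64/45$ then falls out of the factorials $(2m)!$ together with the Newton coefficients. Checking this constant, for example by a single numerical instance such as $a=b=c=d=1$ adjusted to satisfy $ad=bc$, would be the final step.
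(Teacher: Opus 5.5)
There is a genuine gap. The factorization your argument rests on cannot hold, and the mechanism you build on it would not give the identity even if it did.

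First, $G(t)$ is a signed sum of $\cosh$'s, hence even in $t$, while $\sinh(\lambda_1t)\sinh(\lambda_2t)\sinh(\lambda_3t)$ is odd. Using $\cosh$'s instead does not help, since $G(0)=0$ then forces $K=0$.

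More fundamentally, any scheme giving $F_{2m}=c_m\,p_{2m}(\lambda_1,\lambda_2,\lambda_3)$ for a \emph{single} zero-sum triple fails. With $e_1=0$, Newton gives $p_2=-2e_2$, $p_8=2e_2^4-8e_2e_3^2$ and $p_{10}=-2e_2^5+15e_2^2e_3^2$. So $F_2=0$ forces $e_2=0$, and then $F_8=F_{10}=0$. But $F_8$ does not vanish on $ad=bc$: for $(a,b,c,d)=(1,2,3,6)$ one gets $F_6=302400$ and $F_8=73382400$.

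Two smaller problems. Your proposed check at $a=b=c=d=1$ is degenerate, because every $F_{2m}$ vanishes there. The ``monomial'' fallback in your last paragraph is asserted without any mechanism, and the hypothesis $ad=bc$ never enters your argument except to be parametrized away.

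The missing idea is that the six forms split into two zero-sum triples,
$T_1=(b+c+d,\,-(a+b+c),\,a-d)$ and $T_2=(a+c+d,\,-(a+b+d),\,b-c)$,
with $F_{2m}=p_{2m}(T_1)-p_{2m}(T_2)$. So $G$ is a \emph{difference},
$G(t)=4\prod_{u\in T_1}\cosh(tu/2)-4\prod_{u\in T_2}\cosh(tu/2)$,
not a single product. The hypothesis is exactly equality of the quadratic invariants, since $p_2(T_1)-p_2(T_2)=6(bc-ad)$.

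The paper encodes this as a common radius $\rho$ in the polar form of Lemma 1. It then linearizes $f_6,f_8,f_{10}$, each being a constant plus a multiple of $\cos 6\theta$, so all three $F_{2m}$ are multiples of $\cos6\theta_1-\cos6\theta_2$.

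In your Newton language the same argument reads as follows. Take $e_2$ common to both triples and set $\Delta=e_3(T_1)^2-e_3(T_2)^2$. Then $F_6=3\Delta$, $F_8=-8e_2\Delta$ and $F_{10}=15e_2^2\Delta$, and $64\cdot3\cdot15=45\cdot8^2$ finishes the proof. Your instinct to use Newton's identities was sound; it just has to be applied to the difference of two triples sharing $e_2$, not to one triple.
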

This identity is qualified by B. Berndt as "one of the most fascinating finite identities we have ever seen". As far as we know, two proofs of this identity are available, one by Berndt
and Bhargava \cite{Berndt}, and the other one by Nanjundiah \cite{Nanjundiah}.
Berndt and Bhargava's proof is deduced from a clever factorization
of a polynomial reparameterization of the identity. Nanjundiah's proof
is the consequence of the properties of a degree 3 polynomial deduced
from the identity, and the identities satisfied by the powers of its
roots.

\section{A trigonometric approach}

We adopt here a different approach based on the following elementary trigonometric
result.
\begin{lemma}
\label{lemma1}
If $x_{1},y_{1},z_{1}$ and $x_{2},y_{2},z_{2}$ are real numbers
that satisfy the conditions
\begin{equation}
x_{i}+y_{i}+z_{i}=0,\,\,i=1,2\label{eq:centered}
\end{equation}
then there exists two radii $\rho_{1},\rho_{2}\ge0$ and two arguments
$\theta_{1}$ and $\theta_{2}$ such that, for $i=1,2,$
\[
\begin{cases}
x_{i}= & \rho_{i}\cos\theta_{i},\\
y_{i}= & \rho_{i}\cos\left(\theta_{i}-\frac{2\pi}{3}\right),\\
z_{i}= & \rho_{i}\cos\left(\theta_{i}+\frac{2\pi}{3}\right).
\end{cases}
\]
If additionally
\begin{equation}
\label{eq:xiyi}
x_{1}y_{1}+x_{1}z_{1}+y_{1}z_{1}=x_{2}y_{2}+x_{2}z_{2}+y_{2}z_{2},
\end{equation}
then
\[
\rho_{1}=\rho_{2}.
\]
\end{lemma}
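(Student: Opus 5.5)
The plan is to treat each triple $(x_i,y_i,z_i)$ separately for the existence part, and then compare the elementary symmetric function $e_2=xy+xz+yz$ with $\rho^2$.

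\textbf{Existence of the polar representation.} Fix $i$ and drop the index. The map $(\rho,\theta)\mapsto\bigl(\rho\cos\theta,\rho\cos(\theta-\tfrac{2\pi}{3}),\rho\cos(\theta+\tfrac{2\pi}{3})\bigr)$ lands in the plane $x+y+z=0$, because $\cos\theta+\cos(\theta-\tfrac{2\pi}{3})+\cos(\theta+\tfrac{2\pi}{3})=\cos\theta\,(1+2\cos\tfrac{2\pi}{3})=0$. To invert it, I expand $\cos(\theta\mp\tfrac{2\pi}{3})=-\tfrac12\cos\theta\pm\tfrac{\sqrt3}{2}\sin\theta$. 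This gives
\[
y-z=\sqrt3\,\rho\sin\theta .
\]
So, given $(x,y,z)$ with $x+y+z=0$, I choose $(\rho,\theta)$ as the polar coordinates of the planar point $\bigl(x,(y-z)/\sqrt3\bigr)$, that is, $\rho\cos\theta=x$ and $\rho\sin\theta=(y-z)/\sqrt3$. If $x=y=z=0$, take $\rho=0$ and any $\theta$. Then $x=\rho\cos\theta$ holds by construction. The values $y$ and $z$ are recovered from $y+z=-x=-\rho\cos\theta$ and $y-z=\sqrt3\rho\sin\theta$. These are exactly the expanded formulas for $\rho\cos(\theta\mp\tfrac{2\pi}{3})$.

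\textbf{Radius from the symmetric function.} Because $x+y+z=0$, we have
\[
0=(x+y+z)^2=x^2+y^2+z^2+2(xy+xz+yz),
\]
so $xy+xz+yz=-\tfrac12(x^2+y^2+z^2)$. Next I compute $x^2+y^2+z^2=\rho^2\bigl(\cos^2\theta+\cos^2(\theta-\tfrac{2\pi}{3})+\cos^2(\theta+\tfrac{2\pi}{3})\bigr)$. Using $\cos^2u=\tfrac12(1+\cos2u)$, the three terms $\cos2\theta$, $\cos(2\theta-\tfrac{4\pi}{3})$ and $\cos(2\theta+\tfrac{4\pi}{3})$ sum to zero by the same identity as above. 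Hence $x^2+y^2+z^2=\tfrac32\rho^2$ and
\[
xy+xz+yz=-\tfrac34\rho^2 .
\]
Hypothesis \eqref{eq:xiyi} then gives $\rho_1^2=\rho_2^2$. Since $\rho_1,\rho_2\ge0$, it follows that $\rho_1=\rho_2$.

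\textbf{Main point of care.} The only delicate step is the inversion in the existence part. One must check that the chosen $\theta$ reproduces $y$ and $z$ individually, not just $x$. The relations for $y+z$ and $y-z$ settle this. The rest is routine trigonometry.
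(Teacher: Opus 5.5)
Your proof is correct and takes essentially the paper's route: map the plane $x+y+z=0$ linearly to planar coordinates, take polar coordinates there, and read off the radius from $xy+xz+yz=-\tfrac12(x^2+y^2+z^2)$. The differences are cosmetic. You use the coordinates $\bigl(x,(y-z)/\sqrt3\bigr)$, which give $\theta$ directly, whereas the paper completes the square $x^2+xy+y^2=(x+\tfrac{y}{2})^2+\tfrac34y^2$ and then shifts the angle by $\pi/6$; you also state explicitly the relation $xy+xz+yz=-\tfrac34\rho^2$, which the paper leaves implicit in $\rho=r\sqrt{2/3}$.
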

Notice that both parameterizations 
\begin{equation}
\label{parameterization}  
\begin{cases}
x_{1}= & b+c+d\\
y_{1}= & -a-b-c\\
z_{1}= & a-d
\end{cases},\,\,\begin{cases}
x_{2}= & a+c+d\\
y_{2}= & -a-b-d\\
z_{2}= & b-c
\end{cases}
\end{equation}
satisfy  condition \eqref{eq:centered}. If additionally
$ad=bc$, they satisfy  condition \eqref{eq:xiyi}. Moreover, since identity \eqref{eq:Ramanujan}
is homogeneous in the variables $a,b,c,d$, we may assume $\rho=1.$ With $n\ge0,$ define the functions
\[
f_{n}\left(\theta\right)=\cos^{n}\theta+\cos^{n}\left(\theta-\frac{2\pi}{3}\right)+\cos^{n}\left(\theta+\frac{2\pi}{3}\right).
\]
The proof of Ramanujan's identity \eqref{eq:Ramanujan} is a consequence of the following Fourier series expansions, computed in Section 4.\begin{lemma}
The following linearizations hold
\[
f_{6}\left(\theta\right)=\frac{3}{32}\left(10+\cos\left(6\theta\right)\right),
\]
\[
f_{8}\left(\theta\right)=\frac{3}{128}\left(35+8\cos\left(6\theta\right)\right),
\]
\[
f_{10}\left(\theta\right)=\frac{27}{512}\left(14+5\cos\left(6\theta\right)\right).
\]
\end{lemma}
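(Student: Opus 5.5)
Write $\omega=e^{2\pi i/3}$ and expand each cosine power with Euler's formula: $\cos^{n}\phi=2^{-n}\sum_{k=0}^{n}\binom{n}{k}e^{i(n-2k)\phi}$. Setting $\phi=\theta+2\pi j/3$ for $j=0,1,2$ (the shifts $-2\pi/3$ and $+2\pi/3$ are the same as $j=2$ and $j=1$ modulo $2\pi$) and summing over $j$, the frequency $m=n-2k$ is multiplied by $\sum_{j=0}^{2}\omega^{jm}$. This sum equals $3$ when $3\mid m$ and $0$ otherwise. Hence
\[
f_{n}(\theta)=\frac{3}{2^{n}}\sum_{\substack{0\le k\le n\\ 3\mid n-2k}}\binom{n}{k}e^{i(n-2k)\theta},
\]
so only harmonics whose frequency is a multiple of $3$ survive. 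For even $n$ the frequency $n-2k$ is even, so the surviving frequencies are multiples of $6$. For $n\le 10$ these are only $0$ and $\pm6$. Pairing $k$ with $n-k$ turns $e^{\pm6i\theta}$ into $2\cos(6\theta)$, giving
\[
f_{n}(\theta)=\frac{3}{2^{n}}\left(\binom{n}{n/2}+2\binom{n}{(n-6)/2}\right)+\frac{3}{2^{n}}\cdot 2\binom{n}{(n-6)/2}\cos(6\theta)\quad\text{(constant and }\cos 6\theta\text{ parts)},
\]
where more precisely the constant term is $\frac{3}{2^{n}}\binom{n}{n/2}$ and the coefficient of $\cos(6\theta)$ is $\frac{6}{2^{n}}\binom{n}{(n-6)/2}$.

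We now check the three cases.

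For $n=6$: the constant term is $\frac{3\cdot20}{64}=\frac{15}{16}=\frac{3}{32}\cdot10$, and the coefficient of $\cos 6\theta$ is $\frac{6}{64}=\frac{3}{32}$. This matches the first formula.

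For $n=8$: the constant term is $\frac{3\cdot70}{256}=\frac{3}{128}\cdot35$, and the coefficient of $\cos 6\theta$ is $\frac{6\cdot8}{256}=\frac{3}{128}\cdot8$. This matches the second formula.

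For $n=10$: the constant term is $\frac{3\cdot252}{1024}=\frac{756}{1024}=\frac{27}{512}\cdot14$, and the coefficient of $\cos 6\theta$ is $\frac{6\cdot\binom{10}{2}}{1024}=\frac{270}{1024}=\frac{27}{512}\cdot5$. This matches the third formula.

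The only real obstacle is the bookkeeping in the roots-of-unity filter: one must make sure that no frequency $\pm12$ appears. This holds because $|n-2k|\le n\le10$. One must also collect the symmetric pairs correctly so that the factor $2$ in the cosine coefficient is not lost. After that, the rest is the binomial arithmetic above.
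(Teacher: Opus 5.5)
Your proof is correct and is essentially the paper's argument. The paper proves a general linearization of $\frac{1}{N}\sum_{k=0}^{N-1}\cos^{2p}\left(\theta+\frac{2k\pi}{N}\right)$ via Fourier coefficients: the same roots-of-unity sum removes every frequency not divisible by $N$, the surviving coefficients are binomial, and specializing to $N=3$ gives exactly your constant term $\frac{3}{2^{n}}\binom{n}{n/2}$ and $\cos(6\theta)$-coefficient $\frac{6}{2^{n}}\binom{n}{(n-6)/2}$.

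The only blemish is your first displayed formula for $f_n$, which wrongly adds $2\binom{n}{(n-6)/2}$ to the constant term. You correct this in the next sentence and your three numerical checks use the correct values, so that display should simply be deleted.
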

As a consequence,
\[
\left(f_{6}\left(\theta_{1}\right)-f_{6}\left(\theta_{2}\right)\right)\left(f_{10}\left(\theta_{1}\right)-f_{10}\left(\theta_{2}\right)\right)=\frac{405}{16384}\left(\cos\left(6\theta_{1}\right)-\cos\left(6\theta_{2}\right)\right)^{2}
\]
and
\[
\left(f_{8}\left(\theta_{1}\right)-f_{8}\left(\theta_{2}\right)\right)^{2}=\frac{9}{256}\left(\cos\left(6\theta_{1}\right)-\cos\left(6\theta_{2}\right)\right)^{2}
\]
and the proof of Ramanuan's identity \eqref{eq:Ramanujan} follows.

\section{Generalizations}

Noticing that 
\[
f_{3}\left(\theta_{1}\right)=\frac{3}{4}\cos\left(3\theta_{1}\right),f_{5}\left(\theta_{1}\right)=\frac{15}{16}\cos\left(3\theta_{1}\right),f_{7}\left(\theta_{1}\right)=\frac{63}{64}\cos\left(3\theta_{1}\right),
\]
the same arguments produce
\begin{prop*}
Without assuming $ad=bc,$
\[
25\left\{ \left(b+c+d\right)^{3}-\left(a+b+c\right)^{3}+\left(a-d\right)^{3}\right\} 
\left\{ \left(b+c+d\right)^{7}-\left(a+b+c\right)^{7}+\left(a-d\right)^{7}\right\} 
\]
\[
=21\left\{ \left(b+c+d\right)^{5}-\left(a+b+c\right)^{5}+\left(a-d\right)^{5}\right\} ^{2}
\]
which simplifies, replacing  $b+c$ by $b,$ to
\[
25\left\{ \left(b+d\right)^{3}-\left(a+b\right)^{3}+\left(a-d\right)^{3}\right\} 
\left\{ \left(b+d\right)^{7}-\left(a+b\right)^{7}+\left(a-d\right)^{7}\right\} 
\]
\[
=21\left\{ \left(b+d\right)^{5}-\left(a+b\right)^{5}+\left(a-d\right)^{5}\right\} ^{2}.
\]
\end{prop*}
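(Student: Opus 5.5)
We use only the first triple $(x_1,y_1,z_1)=(b+c+d,\,-a-b-c,\,a-d)$ from \eqref{parameterization}. It satisfies \eqref{eq:centered} for arbitrary $a,b,c,d$, so the first part of Lemma \ref{lemma1} gives $\rho_1\ge 0$ and $\theta_1$ with $x_1=\rho_1\cos\theta_1$, $y_1=\rho_1\cos(\theta_1-\frac{2\pi}{3})$, $z_1=\rho_1\cos(\theta_1+\frac{2\pi}{3})$. The second triple is not needed, which is why the hypothesis $ad=bc$ (used only to force $\rho_1=\rho_2$) can be dropped.

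For odd $n$ we have $(-a-b-c)^n=-(a+b+c)^n$. Hence each brace in the statement equals
\[
x_1^n+y_1^n+z_1^n=\rho_1^n f_n(\theta_1),\qquad n=3,5,7.
\]
By the displayed linearizations,
\[
f_3(\theta_1)=\tfrac34\cos 3\theta_1,\quad f_5(\theta_1)=\tfrac{15}{16}\cos 3\theta_1,\quad f_7(\theta_1)=\tfrac{63}{64}\cos 3\theta_1 .
\]
Both sides are homogeneous of total degree $10$ in $\rho_1$, so the factor $\rho_1^{10}$ cancels (the case $\rho_1=0$ being trivial). The identity therefore reduces to
\[
25\cdot\tfrac34\cdot\tfrac{63}{64}=21\cdot\left(\tfrac{15}{16}\right)^2 .
\]
Both sides equal $\frac{4725}{256}$: the left is $25\cdot 189/256$ and the right is $21\cdot 225/256$. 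This proves the first identity.

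The one step to justify is the three odd-power linearizations; this is the main computational point, although it is elementary. One way is to expand $\cos^n\theta=2^{-n}\sum_k\binom nk\cos((n-2k)\theta)$ and sum over the three shifts $\theta,\theta\mp\frac{2\pi}{3}$. The sum $\sum_{j}\cos(m(\theta+2\pi j/3))$ equals $3\cos m\theta$ if $3\mid m$ and $0$ otherwise. For $n\le7$ the only surviving odd multiples of $3$ are $\pm3$, since $9>7$, so $f_n(\theta)=\frac{3}{2^{n}}\cdot 2\binom{n}{(n-3)/2}\cos3\theta$. This gives $\frac34,\frac{15}{16},\frac{63}{64}$ for $n=3,5,7$. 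Alternatively, one can use Newton's identities for the roots of $t^3-pt-q$ with $p=-(x_1y_1+x_1z_1+y_1z_1)$ and $q=x_1y_1z_1$. Then $p_3=3q$, $p_5=5pq$ and $p_7=7p^2q$, so the identity reads $25\cdot 3q\cdot 7p^2q=21\cdot 25p^2q^2$, which is exactly true. This gives an independent check.

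The second form follows by substituting $b+c\mapsto b$. The first identity holds for all $a,b,c,d$ and involves $b,c$ only through $b+c$, so the substitution yields the stated identity in $a,b,d$.
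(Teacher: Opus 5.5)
Your proof is correct and follows the paper's route. You apply Lemma 1 to the single centered triple $(b+c+d,\,-a-b-c,\,a-d)$, linearize $f_3,f_5,f_7$ as multiples of $\cos 3\theta_1$, and check that $25\cdot\frac{3}{4}\cdot\frac{63}{64}=21\cdot\left(\frac{15}{16}\right)^2$. Your derivation of those linearizations and your Newton-identity cross-check ($p_3=3q$, $p_5=5pq$, $p_7=7p^2q$) are both sound and supply details the paper leaves implicit.
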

This identity can be generalized to the case of two variables $\left(x_{i},y_{i},z_{i}\right),i=1,2,$
noticing that
\[
f_{3}\left(\theta_{1}\right)-f_{3}\left(\theta_{2}\right)=\frac{3}{4}\left[\cos\left(3\theta_{1}\right)-\cos\left(3\theta_{2}\right)\right]
\]
and accordingly for the functions $f_{5}$ and $f_{7},$ to obtain
\begin{prop*}
Assume $ad-bc=0,$ then
\begin{equation}
25\left\{ \left(b+c+d\right)^{3}-\left(a+b+c\right)^{3}-\left(a+c+d\right)^{3}+\left(a+b+d\right)^{3}+\left(a-d\right)^{3}-\left(b-c\right)^{3}\right\} 
\label{eq:Ramanujan-1}
\end{equation}
\[
\times\left\{ \left(b+c+d\right)^{7}-\left(a+b+c\right)^{7}-\left(a+c+d\right)^{7}+\left(a+b+d\right)^{7}+\left(a-d\right)^{7}-\left(b-c\right)^{7}\right\} 
\]
\[
=21\left\{ \left(b+c+d\right)^{5}-\left(a+b+c\right)^{5}-\left(a+c+d\right)^{5}+\left(a+b+d\right)^{5}+\left(a-d\right)^{5}-\left(b-c\right)^{5}\right\} ^{2}
\]
\end{prop*}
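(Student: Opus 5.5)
Take the two triples of the parameterization \eqref{parameterization}, $(x_1,y_1,z_1)=(b+c+d,\,-a-b-c,\,a-d)$ and $(x_2,y_2,z_2)=(a+c+d,\,-a-b-d,\,b-c)$. Each has zero sum, and under $ad=bc$ they have equal second elementary symmetric functions. By Lemma \ref{lemma1} we can then write $x_i=\rho\cos\theta_i$, $y_i=\rho\cos(\theta_i-\tfrac{2\pi}{3})$, $z_i=\rho\cos(\theta_i+\tfrac{2\pi}{3})$ with a common radius $\rho$. For odd $n$ we have $y_i^n=-(a+b+c)^n$ and $y_2^n=-(a+b+d)^n$. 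Hence the brace of exponent $n\in\{3,5,7\}$ in \eqref{eq:Ramanujan-1} equals
\[
x_1^n+y_1^n+z_1^n-\left(x_2^n+y_2^n+z_2^n\right)=\rho^n\left(f_n(\theta_1)-f_n(\theta_2)\right).
\]
Here I check the signs: $(b+c+d)^n-(a+b+c)^n+(a-d)^n$ is the first triple, and $-(a+c+d)^n+(a+b+d)^n-(b-c)^n$ is minus the second. This is exactly the stated combination.

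Next I use the linearizations $f_3=\tfrac34\cos3\theta$, $f_5=\tfrac{15}{16}\cos3\theta$, $f_7=\tfrac{63}{64}\cos3\theta$. I would verify them quickly from $\sum_k e^{im(\theta+2\pi k/3)}=3e^{im\theta}$ if $3\mid m$ and $0$ otherwise. Expanding $\cos^n$ by the binomial formula, only the frequencies $m\equiv0\pmod 3$ survive. For odd $n\le7$ the only such frequency is $m=3$, since $m=0$ and $m=6$ have the wrong parity and $9>7$. The coefficient is $3\cdot2\binom{n}{(n-3)/2}/2^n$, which gives $3/4$, $15/16$ and $63/64$. Put $\Delta=\cos3\theta_1-\cos3\theta_2$. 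Then the left side is
\[
25\cdot\rho^{10}\cdot\tfrac34\cdot\tfrac{63}{64}\,\Delta^2=\tfrac{4725}{256}\rho^{10}\Delta^2,
\]
and the right side is
\[
21\cdot\rho^{10}\left(\tfrac{15}{16}\right)^2\Delta^2=\tfrac{4725}{256}\rho^{10}\Delta^2.
\]
The two sides agree, and homogeneity in $\rho$ is automatic because $3+7=2\cdot5$.

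The only delicate point is the applicability of Lemma \ref{lemma1}. This requires verifying that $ad=bc$ gives $x_1y_1+x_1z_1+y_1z_1=x_2y_2+x_2z_2+y_2z_2$. With zero sum, this is equivalent to $x_1^2+y_1^2+z_1^2=x_2^2+y_2^2+z_2^2$. Expanding both sides, the squared terms match, and the difference of cross terms reduces to a multiple of $ad-bc$. One small expansion suffices. Real variables are assumed, but since both sides of \eqref{eq:Ramanujan-1} are polynomials, the identity on the real variety $ad=bc$ extends to complex values. The real points are Zariski dense in the irreducible quadric $ad=bc$, so polynomial identity follows.
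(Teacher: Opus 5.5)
Your proof is correct and follows the paper's route: the parameterization \eqref{parameterization}, Lemma~\ref{lemma1} with a common radius under $ad=bc$, the odd linearizations $f_3,f_5,f_7\propto\cos 3\theta$, and the check $25\cdot\frac34\cdot\frac{63}{64}=21\cdot\left(\frac{15}{16}\right)^2=\frac{4725}{256}$. You also supply details the paper leaves implicit, all of which check out: the second symmetric functions differ by $3(ad-bc)$, the coefficients come from the roots-of-unity filter, and the identity extends from real to complex variables (note the typo $y_i^n$ for $y_1^n$).
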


 Versions of these identities in the case $\rho_1\ne\rho_2$ can be produced, to the price of
symmetry. For example:
\begin{prop*}
Assuming $ad=bc,$ we have
\[
\hspace{-1cm}8\left(a^{2}+ab+b^{2}\right)\left(a^{2}+ac+c^{2}\right)
\left\{ \left(a+b+c\right)^{6}+\left(b+c+d\right)^{6}-\left(c+d+a\right)^{6}-\left(d+a+b\right)^{6}+\left(a-d\right)^{6}-\left(b-c\right)^{6}\right\} 
\]
\[
=3a^{2}\left\{ \left(a+b+c\right)^{8}+\left(b+c+d\right)^{8}-\left(c+d+a\right)^{8}-\left(d+a+b\right)^{8}+\left(a-d\right)^{8}-\left(b-c\right)^{8}\right\} 
\]
or equivalently
\[
4\left\{ \left(a+b+c\right)^{2}+\left(b+c+d\right)^{2}+\left(a-d\right)^{2}\right\} 
\]
\[
\times \left\{ \left(a+b+c\right)^{6}+\left(b+c+d\right)^{6}-\left(c+d+a\right)^{6}-\left(d+a+b\right)^{6}+\left(a-d\right)^{6}-\left(b-c\right)^{6}\right\} 
\]
\[
=3\left\{ \left(a+b+c\right)^{8}+\left(b+c+d\right)^{8}-\left(c+d+a\right)^{8}-\left(d+a+b\right)^{8}+\left(a-d\right)^{8}-\left(b-c\right)^{8}\right\} 
\]
\end{prop*}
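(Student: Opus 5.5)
The plan is to reuse the polar parameterization of Lemma \ref{lemma1}, this time without normalizing the common radius. I take the parameterization \eqref{parameterization}. Under $ad=bc$, Lemma \ref{lemma1} gives a common radius $\rho=\rho_1=\rho_2$ and arguments $\theta_1,\theta_2$. Write
\[
S_n=x_1^n+y_1^n+z_1^n-x_2^n-y_2^n-z_2^n .
\]
For even $n$ the signs in \eqref{parameterization} disappear. So $S_6$ and $S_8$ are exactly the braced expressions in the statement, and by homogeneity $S_n=\rho^n\bigl(f_n(\theta_1)-f_n(\theta_2)\bigr)$.

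\textbf{Step 1.} I insert the linearizations of $f_6$ and $f_8$ from the second lemma. The constant terms cancel, and with $\Delta=\cos(6\theta_1)-\cos(6\theta_2)$ this gives
\[
S_6=\tfrac{3}{32}\rho^6\Delta,\qquad S_8=\tfrac{24}{128}\rho^8\Delta=\tfrac{3}{16}\rho^8\Delta .
\]
Hence $S_8=2\rho^2 S_6$. This holds with no division by $\Delta$, so degenerate cases need no separate treatment.

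\textbf{Step 2.} I express $\rho^2$ polynomially. The identity $f_2(\theta)=\cos^2\theta+\cos^2(\theta-\tfrac{2\pi}{3})+\cos^2(\theta+\tfrac{2\pi}{3})=\tfrac32$ is elementary: the sum of $\cos(2\theta+k\tfrac{4\pi}{3})$ over the three shifts vanishes. It gives $x_1^2+y_1^2+z_1^2=\tfrac32\rho^2$. Substituting into Step 1 yields
\[
3S_8=4\,(x_1^2+y_1^2+z_1^2)\,S_6 ,
\]
and $x_1^2+y_1^2+z_1^2=(a+b+c)^2+(b+c+d)^2+(a-d)^2$. This is the second form of the proposition.

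\textbf{Step 3.} To get the first form, it suffices to show that under $ad=bc$
\[
a^2\bigl[(a+b+c)^2+(b+c+d)^2+(a-d)^2\bigr]=2(a^2+ab+b^2)(a^2+ac+c^2).
\]
Multiplying the second form by $a^2$ then gives exactly the first form. I multiply each term inside the bracket by $a$ and use $ad=bc$:
\[
a(a+b+c)=a^2+ab+ac,\qquad a(b+c+d)=ab+ac+bc,\qquad a(a-d)=a^2-bc .
\]
Write $u=a^2+ab+ac$ and $v=ab+ac+bc$. The three quantities are $u$, $v$ and $u-v$, with $u-v=a^2-bc$. The sum of squares is then $u^2+v^2+(u-v)^2=2(u^2-uv+v^2)$.

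It remains to check the polynomial identity $u^2-uv+v^2=(a^2+ab+b^2)(a^2+ac+c^2)$ by expanding both sides in $a,b,c$. This expansion is the only real computation, and I expect it to be the main, though routine, obstacle. As a sanity check, take $b=0$: then $u=a^2+ac$ and $v=ac$, so $u^2-uv+v^2=a^4+a^3c+a^2c^2$, which equals $a^2(a^2+ac+c^2)$ as required.
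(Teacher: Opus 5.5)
Your proof is correct, and it is the route the paper intends. The paper states this proposition without a written proof, as another instance of the same method: keep $\rho$ un-normalized, read off $S_8=2\rho^2S_6$ from the linearizations of $f_6,f_8$, use $\rho^2=\tfrac23\,(x_1^2+y_1^2+z_1^2)$ as in the proof of Lemma~\ref{lemma1}, and rewrite $a^2(x_1^2+y_1^2+z_1^2)$ using $ad=bc$. The one expansion you left open does hold: both $u^2-uv+v^2$ and $(a^2+ab+b^2)(a^2+ac+c^2)$ equal $a^4+a^3b+a^3c+a^2b^2+a^2c^2+a^2bc+ab^2c+abc^2+b^2c^2$.
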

The reason why parameterization \eqref{parameterization} was chosen by Ramanujan still eludes us.
\section{Proofs of Lemmas 1 and 2}
\subsection{Proof of Lemma \ref{lemma1}}
Denote $(x,y,z)$ one of both  triples  $(x_i,y_i,z_i),\,\,i=1,2$ and define
\[
r^{2}=x^{2}+y^{2}+z^{2}.
\]
Assuming $x+y+z=0$ produces
\[
\frac{r^{2}}{2}=\frac{x^{2}+y^{2}+z^{2}}{2}=-\left(xy+xz+yz\right)=-xy-z\left(x+y\right)
=-xy+\left(x+y\right)^{2}=x^{2}+y^{2}+xy
\]
so that
\[
\frac{r^{2}}{2}=X^{2}+Y^{2}
\]
with the new variables
\[
X=x+\frac{y}{2},\,\,Y=\frac{\sqrt{3}}{2}y.
\]
Define an argument $\alpha$ such that
\[
X=\frac{r}{\sqrt{2}}\cos\alpha,\,\,Y=\frac{r}{\sqrt{2}}\sin\alpha
\]
and deduce 
\[
x=\frac{r}{\sqrt{2}}\cos\alpha-\frac{1}{2}r\sqrt{\frac{2}{3}}\sin\alpha,\,\,y=r\sqrt{\frac{2}{3}}\sin\alpha
\]
that reduces to 
\[
x=r\sqrt{\frac{2}{3}}\cos\left(\alpha+\frac{\pi}{6}\right),y=r\sqrt{\frac{2}{3}}\cos\left(\alpha-\frac{\pi}{2}\right)
\]
and produces
\[
z=-x-y=r\sqrt{\frac{2}{3}}\cos\left(\alpha+\frac{5\pi}{6}\right).
\]
Defining
\[
\rho=r\sqrt{\frac{2}{3}},\,\,\theta=\alpha+\frac{\pi}{6}
\]
produces the result.

\subsection{A trigonometric identity}
\begin{prop*}
For two integers $p$ and $N$, we have the linearization 
\[
\frac{1}{N}\sum^{N-1}_{k=0}\cos^{2p}\left(\theta+\frac{2k\pi}{N}\right)=\sum^{p}_{\underset{2m\equiv0\mod N}{m=0}}a_{2m,p}\cos\left(2m\theta\right)
\]
with the Fourier coefficients
\[
a_{2m,p}=\begin{cases}
\frac{\binom{2p}{p+m}}{2^{2p-1}}, & m\ne0,\,\,m\le p\\
\frac{\binom{2p}{p}}{2^{2p}}, & m=0.
\end{cases}
\]
In the case of an odd power, 
\[
\frac{1}{N}\sum^{N-1}_{k=0}\cos^{2p+1}\left(\theta+\frac{2k\pi}{N}\right)=\sum^{p}_{\underset{2m+1\equiv0\mod N}{m=0}}a_{2m+1,p}\cos\left(\left(2m+1\right)\theta\right)
\]
with 
\[
a_{2m+1,p}=\frac{\binom{2p+1}{p-m}}{2^{2p+1}},\,\,0\le m\le p.
\]

\end{prop*}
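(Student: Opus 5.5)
The plan is to linearize each power of cosine with the binomial theorem applied to $\cos x=\frac{1}{2}(e^{ix}+e^{-ix})$, and then average over the $N$ shifted angles. Averaging a single complex exponential over the $N$th roots of unity acts as a filter: it keeps only the frequencies divisible by $N$.

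For the even case, write
\[
\cos^{2p}x=\frac{1}{2^{2p}}\sum_{j=0}^{2p}\binom{2p}{j}e^{i(2p-2j)x}.
\]
Reindex with $m=p-j$, so that $m$ runs from $-p$ to $p$ and the coefficient is $\binom{2p}{p+m}$. Then pair the terms $m$ and $-m$ using $\binom{2p}{p+m}=\binom{2p}{p-m}$. This gives the classical linearization
\[
\cos^{2p}x=\frac{\binom{2p}{p}}{2^{2p}}+\sum_{m=1}^{p}\frac{\binom{2p}{p+m}}{2^{2p-1}}\cos(2mx).
\]
These already are the coefficients $a_{2m,p}$ in the statement, including the special value at $m=0$. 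Next substitute $x=\theta+\frac{2k\pi}{N}$ and average over $k$. Writing $\cos(2mx)=\operatorname{Re}e^{2imx}$, the key fact is the geometric-sum identity
\[
\frac{1}{N}\sum_{k=0}^{N-1}e^{2im\cdot 2k\pi/N}=
\begin{cases}1,& N\mid 2m,\\ 0,& \text{otherwise}.\end{cases}
\]
It follows because $e^{4im\pi/N}$ is an $N$th root of unity, and it equals $1$ exactly when $N\mid 2m$. Consequently $\frac{1}{N}\sum_k\cos\bigl(2m\theta+\frac{4km\pi}{N}\bigr)$ equals $\cos(2m\theta)$ if $2m\equiv 0 \bmod N$, and vanishes otherwise. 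The term $m=0$ always survives. This yields the first formula.

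The odd case is identical in structure. Expanding and pairing the exponents $\pm(2m+1)$ gives
\[
\cos^{2p+1}x=\frac{1}{2^{2p}}\sum_{m=0}^{p}\binom{2p+1}{p-m}\cos\bigl((2m+1)x\bigr).
\]
There is no constant term, because all frequencies are odd. The same root-of-unity filter then retains exactly the indices with $N\mid 2m+1$.

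The only real obstacle is bookkeeping: the index shift, the pairing of $\pm m$ that produces the factor $2$, and the normalization constants. Here I expect the computation to give the denominator $2^{2p}$ rather than $2^{2p+1}$ in the odd case. A check with $p=1$ confirms this: $\cos^{3}x=\frac{1}{4}(3\cos x+\cos 3x)$. This normalization is also consistent with the values $f_3=\frac{3}{4}\cos 3\theta$, $f_5=\frac{15}{16}\cos3\theta$ and $f_7=\frac{63}{64}\cos 3\theta$ used in the previous section. So I would verify the constant in the statement against these small cases before finalizing.

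As a check of the even formula, take $N=3$ and $p=3$. Only $m=0$ and $m=3$ survive, and $f_6$ equals $3\bigl(\frac{20}{64}+\frac{2}{64}\cos6\theta\bigr)$, which is $\frac{3}{32}(10+\cos 6\theta)$. This recovers Lemma 2 as stated.
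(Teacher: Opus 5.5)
Your proof is correct and is essentially the paper's argument: averaging over the $N$ shifts multiplies the frequency-$n$ Fourier component of $\cos^{2p}$ or $\cos^{2p+1}$ by the root-of-unity indicator of $N\mid n$. The only difference is that you read these components off the binomial expansion, whereas the paper obtains them from the integral $\frac{1}{2\pi}\int_0^{2\pi}\cos^{2p}\eta\,e^{-in\eta}\,d\eta$.

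You are also right that the odd-case coefficient must be $\binom{2p+1}{p-m}/2^{2p}$, not $\binom{2p+1}{p-m}/2^{2p+1}$ as stated. The stated value is the complex coefficient of $e^{\pm i(2m+1)\theta}$ without the doubling that comes from pairing $\pm(2m+1)$. Your corrected constant is the one consistent with $\cos^{3}x=\frac{1}{4}(3\cos x+\cos 3x)$ and with the paper's own values $f_3=\frac{3}{4}\cos 3\theta$, $f_5=\frac{15}{16}\cos 3\theta$, $f_7=\frac{63}{64}\cos 3\theta$.
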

\begin{proof}
The left-hand side is a periodic function of $\theta$ with period
$T=2\pi$: its Fourier coefficients are obtained as
\[
a_{n,p}=\frac{1}{2\pi}\int^{2\pi}_{0}\frac{1}{N}\sum^{N-1}_{k=0}\cos^{2p}\left(\theta+\frac{2k\pi}{N}\right)e^{-\imath n\theta}d\theta=\frac{1}{2\pi N}\sum^{N-1}_{k=0}\int^{2\pi}_{0}\cos^{2p}\left(\theta+\frac{2k\pi}{N}\right)e^{-\imath n\theta}d\theta.
\]
Using the change of variable $\eta=\theta+\frac{2k\pi}{N}$ and noticing
that, by periodicity, the domain of integration can be shifted back to $\left[0,2\pi\right],$
we deduce
\[
a_{n,p}=\frac{1}{2\pi N}\sum^{N-1}_{k=0}\int^{2\pi}_{0}\cos^{2p}\left(\eta\right)e^{-\imath n\left(\eta-\frac{2k\pi}{N}\right)}d\eta
=\frac{1}{2\pi}\int^{2\pi}_{0}\cos^{2p}\left(\eta\right)e^{-\imath n\eta}\left(\frac{1}{N}\sum^{N-1}_{k=0}e^{\imath2\pi\frac{kn}{N}}\right)d\eta.
\]
The inner sum is evaluated as the indicator function
\[
\sum^{N-1}_{k=0}e^{\imath2\pi\frac{kn}{N}}=\begin{cases}
0, & n\not\equiv0\mod N\\
1, & n\equiv0\mod N
\end{cases}
\]
In the case $n\equiv0\mod N,$ symmetry arguments show that the integral
vanishes for odd values of $m,$ and, for even values of $n=2m,$
\[
a_{2m,p}=\frac{1}{2\pi}\int^{2\pi}_{0}\cos^{2p}\left(\eta\right)e^{-\imath2m\eta}d\eta=\begin{cases}
\frac{\binom{2p}{p+m}}{2^{2p-1}}, & m\ne0\\
\frac{\binom{2p}{p}}{2^{2p}}, & m=0.
\end{cases}
\]
Notice that this result implies that $a_{2m,p}=0$ for $m>p.$
The odd power case is proved in the same way.
\end{proof}

\end{document}